\title{\Large\bf Two notes on generalized Darboux properties and related features of additive functions}
\author{\sc Gabriel Istrate}
\date{}
\cleardoublepage \pagestyle{myheadings}
\def\abstractname{Abstract -}   % <-----------------
\def\abstract{\begin{adjustwidth}{1cm}{1cm} \par    \footnotesize \noindent {\bf \abstractname} 
\def\endabstract{ \end{adjustwidth} \smallskip }}
\newtheorem{theorem}{Theorem}[section]}
\newtheorem{proposition}{Proposition}[section]}
\newtheorem{definition}{Definition}[section]}
\newtheorem{lemma}{Lemma}[section]}
\newtheorem{corollary}{Corollary}[section]}
\def\goesto{\rightarrow}
\begin{document}
\maketitle
\vskip 1.5em
 \begin{abstract}
 We present two results on generalized Darboux properties of additive  real functions. 
 
The first results deals with a weak continuity property, called {\bf Q}-continuity, shared by {\em all} additive functions. We show that every {\bf Q}-continuous function is the uniform limit of a sequence of Darboux functions. The class of {\bf Q}-continuous functions includes the class of Jensen convex functions. We discuss further connections with related concepts, such as {\bf Q}-differentiability. 

Next, given a ${\bf Q}$-vector space $A\subseteq {\bf R}$ of cardinality ${\bf c}$ we consider the class ${\cal DH}^{*}(A)$ of additive functions such that for every interval $I\subseteq {\bf R}$, $f(I)=A$. We show that every function in class ${\cal DH}^{*}(A)$ can be written as the sum of a linear (additive continuous) function and an additive function with the Darboux property if and only if $A={\bf R}$. We apply this result to obtain a relativization of a certain hierarchy of real functions to the class of additive functions. 
 \end{abstract}
 
\begin{keywords}
additive functions, generalized Darboux properties, {\bf Q}-continuity. 
\end{keywords}

\begin{MSC}
Primary: 26A15. 
\end{MSC}

\section{Introduction}

Structural properties (such as weak notions of continuity, Darboux property or Jensen convexity) of real functions, especially properties common to large classes of functions, 
 have formed a significant part of the interests of Professor Solomon Marcus in Real Analysis \cite{marcus-hamel,marcus-jensen,marcus-darboux-connected,marcus-darboux-tams, marcus-quasi}. This paper presents two contributions in this area, motivated by the Darboux-like properties of additive functions. These functions are the solutions of the Cauchy functional equation
\[
f(x+y)=f(x)+f(y)
\]
(see \cite{kuczma} for a modern introduction). Together with  researchers such as A.M. Bruckner, J.~Ceder, F.B. Jones,  W.~Kulpa, F.~Obreanu, J. Sm\'{i}tal, M.~Weiss, S.~Marcus \cite{marcus-hamel} investigated structural properties of additive functions, such as the connectedness of the graph and the Darboux property. 

Besides the linear functions $f(x)=r\cdot x$ for some $r\in {\bf R}$, the class of additive functions contains a host of functions lacking most ``regularity" properties of linar functions: these additive functions are nowhere continuous, nowhere monotone and have a graph that is dense in ${\bf R}\times {\bf R}$. As noted in \cite{kuczma} (pp. 322) ``discontinuous additive functions have many pathological properties. Therefore it is often believed that such functions cannot have any nice property." But this is not the case: since the writing of  \cite{Ist3}, a Ph.D. thesis \cite{ban-thesis} and several papers (among them \cite{BAN95,BAN96,cies,cies2,cies3,natkaniec,natkaniec-rosen}) have investigated Darboux-like properties and relaxations of continuity for additive functions. 

The first contribution of this paper is the introduction of a weak continuity property, we call {\bf Q}-continuity (here {\bf Q} refers to the set of rationals) shared by {\em all} additive functions. We show that every {\bf Q}-continuous function is the uniform limit of a sequence of Darboux functions. The class of {\bf Q}-continuous functions includes the class of Jensen convex functions. We further discuss the related notion of {\bf Q}-differentiability of a real function.  

In the second note we consider the class ${\cal DH}^{*}(A)$ (where $A$ is a vector subspace of {\bf R} of cardinality $c$.) of additive functions such that for every interval $I\subseteq {\bf R}$, $f(I)=A$. We show that every function in class 
${\cal DH}^{*}(A)$ can be written as the sum of a linear (additive continuous) function and an additive function with the Darboux property only if $A={\bf R}$. The application of this result to the relativization of a function hierarchy to the class of additive functions is presented. 

Results in this paper originate in my graduation thesis, written in 1994 under the supervision of Professor Marcus \cite{Ist3}. Rather than just publishing a set of old results, I have attempted, however to reconsider them from the vantage point of a more senior researcher (though one no longer actively working in Real Analysis): I have substantially modified my original definitions from \cite{Ist3}, reproved some results and added new ones, and situated them in the context of more recent literature in Real Analysis. It is my hope (particularly with respect to the results in Section~\ref{s2}) that the notions presented here can stimulate further research.  

The paper concludes with a recollection on my collaboration with professor Marcus and its students that led to these results. 

\section{{\bf Q}-continuous functions and the inclusion ${\cal H}\subset {\cal U}$.}
\label{s2}
Let ${\cal H}$ be the class of additive functions, 
and let ${\cal U}$ be the class of functions that are the uniform limit of a sequence 
of Darboux functions. Bruckner, Ceder and Weiss \cite{BCW} have observed that the inclusion 
${\cal H}\subset {\cal U}$ is true, and follows from a result of Beckenbach and Bing 
\cite{Bebi} 
(every Jensen convex function is in ${\cal U}$). That is, the following 
hierarchy holds: 
\begin{equation}\label{hier}
{\cal H}\subset {\cal J}\subset {\cal U},
\end{equation} 

where ${\cal J}$ is the set of Jensen continuous function. On the other hand linear functions 
are, evidently continuous, and continuous functions have the Darboux property. We wondered whether 
these results ``relativize" (under suitable weak notions of continuity and Darboux property) to all 
additive functions.  That is, we wondered whether there exists a weak notion of continuity
such that
\begin{itemize}
\item[(i). ] every additive function is ``weakly continuous". 
\item[(ii). ] every ``weakly continuous"  function belongs to the class ${\cal U}$.
\end{itemize}

\noindent i.e. the inclusion ${\cal H}\subset {\cal U}$ is the 
consequence of some weak analogue of Darboux' Theorem ? 

This question motivated work (presented below) in Section 5.4 our bachelor thesis \cite{Ist3}. Since then, the related question ``are additive functions continuous in some weak sense" has been independently raised (according to \cite{kostyrko}) by T.~Sal\'at.

In the sequel we present a weak continuity notion with properties (i). and (ii), somewhat modifying the solution from 
\cite{Ist3}:  

\iffalse
\begin{definition}

Let $x\in {\bf R}$. Function $f:{\bf R}\rightarrow {\bf R}$ is {\bf Q}-{\em continuous at $x$} if for every $y,z\in {\bf R}$, $y,z\neq 0$,  
\begin{enumerate}
\item the function $g_{y,z}(t)=\lim_{\alpha \rightarrow t}f(\alpha y+(1-\alpha) z)$  is 
well-defined and continuous at $t=\frac{x-z}{y-z}$.  
\item additionally,  
$\lim_{\alpha \rightarrow 0} g_{y,z}(\alpha) =f(z)$,
\end{enumerate}
where in the above limits {\em $\alpha$ assumes only 
rational values}. 
\label{qc}
\end{definition}
\fi 

\begin{definition}

Let $x\in {\bf R}$. Function $f:{\bf R}\rightarrow {\bf R}$ is {\bf Q}-{\em continuous at $x$ from the left} if there exists $\epsilon > 0$ such that for every $y,z$ with $x-\epsilon<y< z\leq x\in {\bf R}$, 
there exists a continuous function $\overline{f}_{y,z}:[y,z]\rightarrow {\bf R}$ such that 
\begin{equation}
\overline{f}_{y,z}=f\mbox{ on the set }\{\alpha y+ (1-\alpha)z: \alpha \in {\bf Q}\cap [0,1]\}
\label{qc-cond}
\end{equation}
{\bf Q}-{\em continuity at $x$ from the right} is defined analogously. 
{\bf Q}-{\em continuity at $x$} is defined simply as {\bf Q}-continuity at $x$ both from the left and from the right.
\label{qc}
\end{definition}

We stress that in the above definition ${\bf Q}$ refers to the class of 
rational numbers. In particular our definition is to be distinguished 
from the (shorthand for the) class of quasicontinuous  functions \cite{quasicontinuous}, often abbreviated in the literature as q-continuous. 

Definition~\ref{qc} should be compared to Banaszewski's $\mathcal{E}$-continuity \cite{BAN95}, and to the similar notion of path Darboux property \cite{path-darboux}. 
The former definition requires continuity via  a system of paths, a feature shared (in a very restrictive form) by all 
{\bf Q}-continuous functions. However, our definition is substantially more ``rigid": we need, in fact, an uncountable system of paths. 

The following result provides a substantial number of examples of {\bf Q}-continuous functions: 

\begin{theorem} The following are true: 
\begin{itemize} 
\item[(a). ]If $f,g:{\bf R}\rightarrow {\bf R}$ are functions that are (left, right, bilaterally) {\bf Q}-continuous at point $x_{0}$ then for all $\alpha \in {\bf R}$, 
$f+g,\alpha\cdot f, f\cdot g$ are (left, right, bilaterally) {\bf Q}-continuous at $x_{0}$. 
\item[(b). ]All additive functions $H$ are {\bf Q}-continuous everywhere. 
\end{itemize} 
\label{first}
\end{theorem}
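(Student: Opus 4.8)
The plan is to dispatch (a) by pushing the required continuous extensions through the algebraic operations, and to prove (b) by exploiting the $\mathbf{Q}$-homogeneity of additive functions, which forces $H$ to be \emph{affine} along every ``rational segment'' $D_{y,z}:=\{\alpha y+(1-\alpha)z:\alpha\in\mathbf{Q}\cap[0,1]\}$.

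\medskip
\noindent\textbf{Part (a).} Suppose $f$ is (say left) $\mathbf{Q}$-continuous at $x_0$ with witness $\epsilon_f$, and $g$ likewise with $\epsilon_g$; put $\epsilon=\min(\epsilon_f,\epsilon_g)$. For any $y,z$ with $x_0-\epsilon<y<z\le x_0$ we are handed continuous maps $\overline{f}_{y,z},\overline{g}_{y,z}:[y,z]\to\mathbf{R}$ agreeing with $f,g$ on $D_{y,z}$. Then $\overline{f}_{y,z}+\overline{g}_{y,z}$, $\alpha\,\overline{f}_{y,z}$, and $\overline{f}_{y,z}\cdot\overline{g}_{y,z}$ are again continuous on $[y,z]$ and agree on $D_{y,z}$ with $f+g$, $\alpha f$, $f\cdot g$ respectively; the right-hand and bilateral cases are identical. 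This step is entirely routine, resting only on the fact that sums, scalar multiples and products of continuous functions are continuous.

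\medskip
\noindent\textbf{Part (b).} Recall that an additive $H$ is $\mathbf{Q}$-linear: $H(u+v)=H(u)+H(v)$ and $H(qu)=qH(u)$ for all $q\in\mathbf{Q}$. Hence for $\alpha\in\mathbf{Q}\cap[0,1]$,
\[
H(\alpha y+(1-\alpha)z)=\alpha H(y)+(1-\alpha)H(z).
\]
Define $\overline{H}_{y,z}:[y,z]\to\mathbf{R}$ to be the linear interpolant through $(y,H(y))$ and $(z,H(z))$, namely $\overline{H}_{y,z}(t)=\frac{z-t}{z-y}H(y)+\frac{t-y}{z-y}H(z)$. A one-line computation shows that for $t=\alpha y+(1-\alpha)z$ one has $\frac{z-t}{z-y}=\alpha$ and $\frac{t-y}{z-y}=1-\alpha$, so $\overline{H}_{y,z}(t)=\alpha H(y)+(1-\alpha)H(z)=H(t)$ for every $t\in D_{y,z}$, i.e. the identity (\ref{qc-cond}) holds. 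Since $\overline{H}_{y,z}$ is affine, hence continuous, and this construction works for \emph{every} pair $y<z$ (so $\epsilon$ may be taken to be $1$, or any positive value), $H$ is $\mathbf{Q}$-continuous at every $x\in\mathbf{R}$ both from the left and from the right, hence everywhere.

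\medskip
The only point worth flagging is conceptual rather than technical: one must notice that $\mathbf{Q}$-linearity is precisely what is needed for the restriction of $H$ to $D_{y,z}$ to extend continuously — in fact affinely — and this is why the rigid, uncountable path system demanded by Definition~\ref{qc} is nonetheless available for additive functions even though they are nowhere continuous in the usual sense. Beyond this observation I anticipate no obstacles.
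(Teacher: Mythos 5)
Your proposal is correct and follows essentially the same route as the paper: part (a) by taking $\epsilon=\min(\epsilon_f,\epsilon_g)$ and combining the continuous extensions via sum, scalar multiple and product, and part (b) by using ${\bf Q}$-linearity of $H$, i.e. $H(\alpha y+(1-\alpha)z)=\alpha H(y)+(1-\alpha)H(z)$ for rational $\alpha$, so that the affine interpolant through $(y,H(y))$ and $(z,H(z))$ serves as the required continuous extension. Your write-up is in fact slightly cleaner, since you make the affine extension explicit and avoid invoking additivity of $f,g$ in the product case, where none is assumed.
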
 
\begin{proof} 

Immediate. For $f+g$ we take $\epsilon_{f+g}=\min\{\epsilon_{f},\epsilon_{g}\}$ for the parameter in equation~(\ref{qc-cond}) 
and $\overline{f+g}_{y,z}=\overline{f}_{y,z}+\overline{g}_{y,z}.$ For $f\cdot g$, given $\epsilon$ suitable for both $f$ and $g$, for any $x-\epsilon\leq y<z\leq x$, by additivity the component functions $f,g$ have continuous extensions $f_{y,z},g_{y,z}$ with the properties in equation~(\ref{qc-cond}) on $[y,z]$. It is easy to see that $f_{y,z}\cdot g_{y,z}$ is such an extension for $h$.   

The second part is trivial, since  for $\alpha\in {\bf Q}\cap [0,1]$, $H(\alpha\cdot y+(1-\alpha)z)=\alpha\cdot H(y)+(1-\alpha)H(z)$.
\end{proof} 

\begin{lemma} 
If $f:{\bf R}\rightarrow {\bf R}$ is (lower/upper/bilaterally) {\bf Q}-continuous at $x_{0}$ and 
$f(x)=0$ locally (below/above/around) $x_{0}$, then $f(x_{0})=0$. 
\label{second}
\end{lemma}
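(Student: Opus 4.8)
The plan is to prove the ``left/below'' case in full detail; the ``right/above'' case is symmetric, and the bilateral ``around'' case follows from either one. So assume $f$ is $\mathbf{Q}$-continuous at $x_{0}$ from the left, and that there is $\delta>0$ with $f\equiv 0$ on $(x_{0}-\delta,x_{0})$; the goal is to conclude $f(x_{0})=0$.

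First I would fix $\epsilon>0$ as in Definition~\ref{qc} and set $\eta=\min\{\epsilon,\delta\}$. Then I would pick any $y$ with $x_{0}-\eta<y<x_{0}$ and invoke the definition with this $y$ together with $z=x_{0}$ (this is a legitimate choice, since the definition only asks $z\le x_{0}$). This yields a continuous function $\overline{f}_{y,x_{0}}:[y,x_{0}]\rightarrow {\bf R}$ agreeing with $f$ on the ``rational path'' $P:=\{\alpha y+(1-\alpha)x_{0}:\alpha\in{\bf Q}\cap[0,1]\}$. The key observation is twofold. On one hand, every point of $P$ coming from $\alpha\in{\bf Q}\cap(0,1]$ equals $x_{0}-\alpha(x_{0}-y)$, hence lies in $[y,x_{0})\subseteq(x_{0}-\delta,x_{0})$, so that $f$, and therefore $\overline{f}_{y,x_{0}}$, vanishes at all of these points. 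On the other hand, the set $\{x_{0}-\alpha(x_{0}-y):\alpha\in{\bf Q}\cap(0,1]\}$ is dense in $[y,x_{0}]$, because ${\bf Q}\cap(0,1]$ is dense in $[0,1]$ and $t\mapsto x_{0}-t(x_{0}-y)$ is a homeomorphism.

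Finally, since $\overline{f}_{y,x_{0}}$ is continuous on $[y,x_{0}]$ and vanishes on a dense subset, it vanishes identically; in particular $\overline{f}_{y,x_{0}}(x_{0})=0$. But $x_{0}$ is precisely the point of $P$ corresponding to $\alpha=0\in{\bf Q}\cap[0,1]$, so $\overline{f}_{y,x_{0}}(x_{0})=f(x_{0})$, whence $f(x_{0})=0$.

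I do not expect a genuine obstacle here; the only point requiring care is to keep the endpoint $z=x_{0}$ (the $\alpha=0$ point) inside the rational path while simultaneously noting that \emph{all other} rational path points fall inside the interval on which $f$ is already known to vanish, so that continuity of $\overline{f}_{y,x_{0}}$ is exactly what is needed to bridge the gap from that interval to $x_{0}$ itself.
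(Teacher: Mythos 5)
Your proof is correct and is essentially the paper's own argument, just written out in full detail: you pick the pair $y<z=x_{0}$ so that $x_{0}$ lies on the rational path, observe that the continuous extension $\overline{f}_{y,x_{0}}$ vanishes on the dense set of path points with $\alpha\in{\bf Q}\cap(0,1]$ (where $f\equiv 0$ locally), and conclude by continuity that $\overline{f}_{y,x_{0}}(x_{0})=f(x_{0})=0$. No gaps; the paper's two-line proof is exactly this reasoning in compressed form.
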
 
\begin{proof}
Clearly $f_{y,z}=0$ for close enough points $y,z$. Just choose a pair such that $x_{0}\in \{\alpha y+ (1-\alpha)z: \alpha \in {\bf Q}\cap [0,1]\}$. 
\end{proof} 

It will be also useful to introduce a weaker version of the class ${\cal U}$, 
denoted ${\cal U}_{0}$ (see \cite{El,Ma,BCW}) that admits a ``local'' characterization. 
\begin{definition}
\begin{enumerate}
\item ${\cal U}_{0}$ is the class of functions 
$f:I \longrightarrow {\bf R}$ such that for every subinterval $J \subset I$, 
the set f(J) is dense in \[[inf_{x \in J}(f(x)),\\
 sup_{x \in J}f(x)].\]
\item Define 
\[
C_{0}^{+}(f,x)=\{ y \in \overline{\protect {\bf R}}|\forall M \in V(x_0),
\forall z>y, card(f^{-1}(M) \cap (y,z])\neq \emptyset\}.
\]

Similarly (but working with intervals upper bounded by $y$) one defines the limit set $C_{0}^{-}(f,x)$. 
\item $f:I \longrightarrow {\bf R}$ is called {\em locally--${\cal
U}_{0}$ at point $x$} if the limit sets ${C_0}^{-}(f,x)$, $
{C_0}^{+}(f,x)$ are intervals.
\end{enumerate}
\end{definition}

\begin{definition} 
Let $I\subset {\bf R}$ be an interval.  A function $f:I\rightarrow {\bf
R}$ is {\em Jensen convex} if, for all $x,y\in I$,
\[
f(\frac{x+y}{2})\leq \frac{f(x)+f(y)}{2}.
\]
The class of Jensen convex real functions will be denoted by $\mathcal{J}$. Clearly $\mathcal{U}\subset \mathcal{J}$. 
\end{definition} 

The above-mentioned ``local'' characterization of class
${\cal U}_{0}$ is

\begin{proposition}\cite{BCW}
$f$ belongs to the class ${\cal U}_{0}$ if and only if it is
locally--${\cal U}_{0}$ at every point $x$.
\end{proposition}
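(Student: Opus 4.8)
The statement is an equivalence whose forward half is a direct computation and whose reverse half contains all the work.

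For the forward implication, assume $f\in\mathcal{U}_{0}$ and fix a point $x$. I would compute the one-sided cluster set $C_{0}^{+}(f,x)$ explicitly. For every admissible $z>x$, the window $[x,z]$ is a subinterval of the domain, so by hypothesis $f([x,z])$ is dense in $[m_{z},M_{z}]$ with $m_{z}=\inf_{[x,z]}f$ and $M_{z}=\sup_{[x,z]}f$; hence its closure in $\overline{{\bf R}}$ is exactly $[m_{z},M_{z}]$. As $z$ decreases to $x$ the $m_{z}$ increase and the $M_{z}$ decrease, and $m_{z'}\le M_{z}$ for all admissible $z,z'$, so
\[
C_{0}^{+}(f,x)=\bigcap_{z>x}[m_{z},M_{z}]=\left[\lim_{z\downarrow x}m_{z},\ \lim_{z\downarrow x}M_{z}\right],
\]
a genuine (possibly degenerate or unbounded) closed interval of $\overline{{\bf R}}$. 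The same argument applies to $C_{0}^{-}(f,x)$, so $f$ is locally--$\mathcal{U}_{0}$ at every point.

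For the converse I argue contrapositively: assuming $f\notin\mathcal{U}_{0}$ I produce a point at which $f$ is not locally--$\mathcal{U}_{0}$. Fix a subinterval $J$ on which $f(J)$ is not dense in $[m,M]:=[\inf_{J}f,\sup_{J}f]$ (so $m<M$). The preliminary step is to extract a genuinely \emph{interior} omitted band: choose an open interval $(c,d)\subseteq[m,M]$ with $(c,d)\cap f(J)=\emptyset$, chosen moreover so that $f(J)$ still meets both $(-\infty,c]$ and $[d,\infty)$. (If the gap one starts from abuts $m$, a short argument shows $m$ must in fact be attained by $f$ on $J$ --- otherwise every value would exceed $c$ and avoid $(c,d)$, forcing $\inf_{J}f\ge d>m$; symmetrically at $M$.) Put $A=\{t\in J:f(t)\le c\}$ and $B=\{t\in J:f(t)\ge d\}$. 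Since no value of $f$ on $J$ lies in $(c,d)$ we have $A\cup B=J$, and since $c<d$ we have $A\cap B=\emptyset$; by the choice of $(c,d)$ both $A$ and $B$ are nonempty.

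The heart of the argument is the following. The interval $J$ is connected, so the partition $J=A\sqcup B$ into two nonempty sets cannot have both pieces open in $J$; therefore the interface $\mathrm{cl}_{J}(A)\cap\mathrm{cl}_{J}(B)$ is nonempty --- were it empty, one would have $A=J\setminus\mathrm{cl}_{J}(B)$ and $B=J\setminus\mathrm{cl}_{J}(A)$, exhibiting $J$ as a disjoint union of two nonempty relatively open sets. Pick $x$ in the interface; say $x\in A$, the case $x\in B$ being identical after replacing $f$ by $-f$. Then $f(x)\le c$, and there is a sequence $t_{n}\to x$ with $t_{n}\in B$, $t_{n}\ne x$, so $f(t_{n})\ge d$; after passing to a subsequence the $t_{n}$ lie on a fixed side of $x$, say to the right (otherwise one works with $C_{0}^{-}$ in place of $C_{0}^{+}$), and after a further subsequence $f(t_{n})\to\beta$ in $\overline{{\bf R}}$ with $\beta\ge d$. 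One checks at once that $\beta\in C_{0}^{+}(f,x)$, that $f(x)\in C_{0}^{+}(f,x)$ (each window $[x,z]$ contains $x$), and that $C_{0}^{+}(f,x)$ is disjoint from the open interval $(c,d)$ --- because $f([x,z])$ is, once $z$ is small enough that $[x,z]\subseteq J$ (and such $z$ exist, since some $t_{n}>x$ belongs to $J$), and passing to the closure cannot create a point inside an omitted open interval. Thus $C_{0}^{+}(f,x)$ contains a point $\le c$ and a point $\ge d$ but nothing strictly between them: it is not an interval, so $f$ is not locally--$\mathcal{U}_{0}$ at $x$, which is the required contradiction.

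I expect the only delicate bookkeeping to lie in the preliminary extraction of the interior band $(c,d)$ with $A,B\ne\emptyset$ --- one must dispose of degenerate configurations in which the failure of density is hidden entirely in an extreme value attained at an isolated point --- and in the uniform handling of the interface point $x$, regardless of which side the sequence $t_{n}$ approaches from and of whether $x$ happens to be an endpoint of $J$ or of the domain. The conceptual content is a single observation: a value that jumps across a forbidden band at a point forces that point's one-sided cluster set to be disconnected, while density on every window is precisely what forbids such jumps.
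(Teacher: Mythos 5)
The paper itself offers no proof of this proposition --- it is quoted from \cite{BCW} as a known characterization --- so there is no internal argument to compare yours against; judged on its own, your two-sided proof is correct and complete. The forward half, writing the one-sided limit set as the nested intersection $\bigcap_{z}\overline{f([x,z])}$ and using density to identify each closure with $[m_{z},M_{z}]$, is exactly right, and the converse half has all the needed ingredients: the extraction of an interior omitted band $(c,d)$ (with the attainment argument when the gap abuts $m$ or $M$), the connectedness argument producing an interface point $x\in\mathrm{cl}_{J}(A)\cap\mathrm{cl}_{J}(B)$, and the observation that the relevant one-sided limit set at $x$ then contains a point $\le c$ and a point $\ge d$ while avoiding $(c,d)$, hence is not an interval.

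One point you must make explicit, because it is not optional: throughout you use the convention that the windows defining $C_{0}^{\pm}(f,x)$ are closed at $x$, so that $f(x)\in C_{0}^{\pm}(f,x)$ automatically (you invoke this when you say ``$f(x)\in C_{0}^{+}(f,x)$ since each window $[x,z]$ contains $x$''). The displayed formula in the paper, once its typos are repaired in the natural way ($\forall M\in V(y)$, $\forall z>x$, $f^{-1}(M)\cap(x,z]\neq\emptyset$), gives the \emph{punctured} cluster set, and with punctured windows the ``only if'' direction of the proposition is simply false: take $f(t)=0$ for $t\neq 0$ and $f(0)=1$; every punctured one-sided cluster set is the degenerate interval $\{0\}$, yet $f\notin\mathcal{U}_{0}$ on any interval containing $0$. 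So either state that your limit sets are taken with windows closed at $x$, or phrase the local condition as ``$C_{0}^{\pm}(f,x)\cup\{f(x)\}$ is an interval''; with that convention fixed, your argument stands as written.
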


With this definition we have the following result, that answers our question: 

\begin{theorem} The following results hold: 
\begin{enumerate}
\item Every Jensen convex function is {\bf Q}-continuous. 
\item If $f$ is {\bf Q}-continuous at $x$, then it is locally-${\cal U}_{0}$ 
at $x$. 
\item Every {\bf Q}-continuous function belongs to the class ${\cal U}$. 
\end{enumerate}
\end{theorem}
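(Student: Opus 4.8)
The plan is to prove the three items in order, since each one feeds the next. For item (1), I would take an arbitrary Jensen convex $f$ and a point $x$, and recall the classical fact (Bernstein--Doetsch, see \cite{kuczma}) that a Jensen convex function which is bounded above on a set of positive measure is continuous; more to the point, on the countable dense set $\{\alpha y+(1-\alpha)z:\alpha\in{\bf Q}\cap[0,1]\}$ the restriction of $f$ is a ${\bf Q}$-convex function of the parameter $\alpha$, hence its convex (lower) envelope $\overline{f}_{y,z}$ on $[y,z]$ is a genuine convex, therefore continuous, function agreeing with $f$ on the rationals. The only thing to check is that choosing $y,z$ close enough to $x$ we may take $\epsilon$ uniform; since Jensen convexity is a global hypothesis, any $\epsilon>0$ works, so item (1) follows for both sides.

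For item (2), assume $f$ is ${\bf Q}$-continuous at $x$ and let me treat, say, the left side; I want to show $C_0^-(f,x)$ is an interval. Fix the $\epsilon>0$ from Definition~\ref{qc} and pick any $y<z\le x$ inside $(x-\epsilon,x]$ with $x$ itself of the form $\alpha_0 y+(1-\alpha_0)z$ for some $\alpha_0\in{\bf Q}\cap[0,1]$ (possible by density). The continuous extension $\overline{f}_{y,z}$ agrees with $f$ on the dense set of rational combinations, so $\overline{f}_{y,z}(x)$ is an accumulation point of values $f(\alpha y+(1-\alpha)z)$ as $\alpha\to\alpha_0$ through rationals; moreover, by the intermediate value theorem applied to $\overline{f}_{y,z}$ on subintervals $[\alpha y+(1-\alpha)z,\, x]$ shrinking to $x$, every value between $\overline{f}_{y,z}(x)=f(x)$ and the nearby values of $f$ is approximated arbitrarily closely by values of $f$ on points arbitrarily close to $x$ from the left. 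Spelling this out shows $C_0^-(f,x)$ contains an interval around $f(x)$ determined by the local oscillation, and in fact equals the closed interval between the lower and upper "${\bf Q}$-limits" of $f$ at $x$ from the left; in particular it is an interval. The symmetric argument handles $C_0^+(f,x)$, so $f$ is locally-${\cal U}_0$ at $x$.

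Item (3) is then a two-step upgrade. First, by item (2) and the quoted Proposition of \cite{BCW}, any ${\bf Q}$-continuous $f$ lies in ${\cal U}_0$. Second, I must promote membership in ${\cal U}_0$ to membership in ${\cal U}$, i.e.\ exhibit $f$ as a uniform limit of Darboux functions. Here I would follow the Beckenbach--Bing / Bruckner--Ceder--Weiss technique: for each $n$, partition ${\bf R}$ into intervals of length $<1/n$, and on each such interval $J$ redefine $f$ on a suitable countable dense subset so as to hit every value in the closed interval $[\inf_J f,\sup_J f]$ densely while perturbing each value by at most $\operatorname{osc}_J f$, which the ${\cal U}_0$ property together with ${\bf Q}$-continuity keeps controlled — actually the cleaner route is: since $f\in{\cal U}_0$, a theorem in \cite{BCW} (or \cite{El,Ma}) already gives ${\cal U}_0$-functions with bounded oscillation locally are in ${\cal U}$, and ${\bf Q}$-continuity at every point forces $f$ to be locally bounded hence to have finite local oscillation everywhere. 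Assembling the pieces, $f\in{\cal U}$.

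The main obstacle I anticipate is item (2): turning the formal statement "$\overline{f}_{y,z}$ is continuous and agrees with $f$ on the rational combinations" into the precise claim that the one-sided cluster sets $C_0^\pm(f,x)$ are \emph{full} intervals rather than merely connected-looking sets. The subtlety is that $f$ off the rationals is completely uncontrolled, so one must argue using \emph{only} the rational points — the intermediate value property of $\overline{f}_{y,z}$ must be transported to $f$ along the dense rational skeleton, and one has to be careful that the interval of approximated values does not degenerate. Once that bookkeeping is done correctly, items (1) and (3) are comparatively routine appeals to the Bernstein--Doetsch theorem and to the cited ${\cal U}_0$-versus-${\cal U}$ machinery respectively.
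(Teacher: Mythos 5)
Your treatment of item (2) has a genuine gap. You fix a single segment with right endpoint $x$ and only ever relate $f(x)$ to the values of $f$ on the rational skeleton $\{\alpha y+(1-\alpha)x:\alpha\in{\bf Q}\cap[0,1]\}$ of that one segment. This can only show that the interval between the one-sided limits inferior and superior of $f$ \emph{along that skeleton} lies in $C_0^{-}(f,x)$; it says nothing about cluster values produced by arbitrary sequences $w_n\to x^{-}$, at which $f$ is completely unrelated to $\overline{f}_{y,x}$. Your closing assertion that $C_0^{-}(f,x)$ \emph{equals} the interval between the lower and upper ${\bf Q}$-limits is precisely what would need proof and is not justified (and merely containing a subinterval around $f(x)$ is not enough to be locally-${\cal U}_0$). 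The missing idea, which is the paper's argument, is to take \emph{arbitrary} $a<b\in C^{-}(f,x)$ and $\xi\in(a,b)$, choose sequences $x_n,y_n\to x^{-}$ with $f(x_n)\to a$, $f(y_n)\to b$, interleave them, and apply Definition~\ref{qc} to the segments $[x_n,y_n]$ whose \emph{endpoints are the sequence points themselves}: since the endpoints are rational combinations ($\alpha=0,1$), the continuous extension $g_n$ agrees with $f$ there, hence takes values near $a$ and near $b$ at the two ends, and the intermediate value theorem together with density of the rational skeleton yields skeleton points $z_n\in[x_n,y_n]$ with $f(z_n)=g_n(z_n)\to\xi$, so $\xi\in C^{-}(f,x)$.

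Item (3) as you argue it would fail. The pivot of your ``cleaner route'' is the claim that ${\bf Q}$-continuity at every point forces local boundedness; this is false: by Theorem~\ref{first}, every discontinuous additive function is ${\bf Q}$-continuous everywhere, yet such functions are unbounded on every interval, so the oscillations you propose to control are infinite and the bounded-perturbation construction cannot even start. Moreover ${\cal U}_0$ is a weaker class than ${\cal U}$, so item (2) cannot simply be ``promoted'' to item (3), and the auxiliary statement you attribute to \cite{BCW} (locally bounded ${\cal U}_0$ functions lie in ${\cal U}$) is in any case inapplicable here. The paper proves (3) directly and differently: for $x_0\in f^{-1}(N)$ and $\epsilon>0$ with $(f(x_0)-\epsilon,f(x_0)+\epsilon)\subset N$, ${\bf Q}$-continuity gives, for every $z\in{\bf R}$, a rational $\alpha\neq 0$ with $|f(x_0+\alpha z)-f(x_0)|<\epsilon$; hence $f^{-1}(N)$ is ${\bf c}$-dense in itself, and the characterization of ${\cal U}$ (Theorem~3.2 of \cite{BCW}) applies. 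Finally, your item (1) is in substance the cited Beckenbach--Bing result \cite{Bebi}, which is also all the paper invokes, but your ``convex envelope'' sketch glosses over continuity of the extension at the endpoints $y,z$ (a convex function on a closed interval need not be continuous there, and matching the values $f(y),f(z)$ uses Jensen convexity of $f$ at points outside $[y,z]$); Bernstein--Doetsch plays no role.
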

In other words, we refine hierarchy~\ref{hier} to: 
\begin{equation} 
{\cal H}\subset {\cal J}\subset {\bf QC}\subset {\cal U},
\end{equation} 

\begin{proof}
\begin{enumerate}
\item A result from \cite{Bebi} (see also comments in the proof of Theorem~4.5 from \cite{BCW}) shows that condition~(\ref{qc-cond}) is true, whenever $f$ is a Jensen convex function, for {\em every} $y<z$. 

\item We will show that $C^{-}(f,x)$ is an interval (a similar result will hold for $C^{+}(f,x)$). Indeed, let $a<b\in 
C^{-}(f,x)$. Let $\xi \in (a,b)$ and consider two sequences $x_{n},y_{n}$, $n\geq 1$, converging to $x$ from below, such that 
\[
\lim_{n\rightarrow \infty} f(x_{n})=a,
\]
\[
\lim_{n\rightarrow \infty} f(y_{n})=b.
\]
Without loss of generality we may assume that $x_{1}<y_{1}<x_{2}<y_{2}<\ldots < x_{n}<y_{n}<\ldots < x$. Since $f$ is {\bf Q}-continuous at $x$, for large enough $n$ there exists continuous function $g_{n}:\stackrel{def}{=} g_{x_{n},y_{n}}:[x_{n},y_{n}]\rightarrow {\bf R}$ such that $f=g_{n}|_{\{\alpha x_{n}+ (1-\alpha)y_{n}: \alpha \in {\bf Q}\cap [0,1]\}}$. 
Since $g_{n}$ is continuous, for large enough $n$ there exists $z_{n}\in [x_{n},y_{n}]\cap \{\alpha x_{n}+ (1-\alpha)y_{n}: \alpha \in {\bf Q}\cap [0,1]\}$ such that 
\[
|g_{n}(z_{n})-\xi |\leq \frac{1}{n}
\]
Since $f(z_{n})=g_{n}(z_{n})\rightarrow \xi$ (as $n\rightarrow \infty$), we infer that $C^{-}(f,x)$ is an interval. 
\iffalse
\item 
Let $f$ be a {\bf Q}-continuous function.  We first show that $f\in
{\cal U}_{0}$. To accomplish that let $a<b\in {\bf R}, \xi \in
(f(a),f(b))\mbox{ and }\epsilon >0$. 

Let $A=\{q\in [a,b]: f(q)< \xi\}$, $B=\{q\in [a,b]: f(q)> \xi\}$. Clearly $A,B\neq \emptyset$, $A\cap B=\emptyset$, $A\cup B=[a,b]$ unless there exists $z\in [a,b]$ with $f(z)=\xi$. Assume this is not the case. Then $\overline{A}\cap \overline{B}
\neq \emptyset$, where $\overline{A}$ is the closure of set $A$. 
Let $z\in \overline{A}\cap \overline{B}$. Suppose, without loss of generality, that $z\in A$. Consider sequence $x_{n}\in B$, $x_{n}\rightarrow z$. Sequence $x_{n}$ must exist by condition  
$\overline{A}\cap \overline{B}
\neq \emptyset$. Consider now $n_{0}\geq 1$ such that $x_{n}\in (z-\epsilon,z+\epsilon)$, for all $n\geq n_{0}$, where $\epsilon >0$ is the constant from Definition~\ref{qc}. 
Since $g_{z,x_{n}}$ is continuous and $g(z)=f(z)<\xi$, $g(x_{n})>\xi$, there must exists $y_{n}\in (z,x_{n})$ so that $g_{z,x_{n}}(y_{n})=\xi $. Choosing
$\alpha \in {\bf Q}$ close enough to $z$ we infer that there exists
$y_{n}\in [\alpha z+ (1-\alpha) x_{n}: \alpha\in [0,1]]\subseteq [a,b]\mbox{ such that }|f(z)-\xi |< \epsilon$.
Thus $f\in {\cal U}_{0}$.
\fi 
\item 
Let us now consider an open interval $N$ so that $f^{-1}(N)\neq
\emptyset \mbox{ and } x_{0}\in f^{-1}(N)$. Let $y_{0}=f(x_{0})\mbox{
and }\epsilon >0 \mbox{ so that }(y_{0}-\epsilon, y_{0}+\epsilon
)\subset N$. For every $z\in {\bf R}$ there exists $\alpha \in {\bf
Q}^{*}$ such that $|f(x_{0}+\alpha z)-\xi |<\epsilon $. It follows
that $x_{0}+\alpha z\in f^{-1}(N)$, therefore $f^{-1}(N)$ is {\bf
c}-dense in itself. By applying the characterization of class $\mathcal{U}$ (Theorem 3.2 in \cite{BCW}) we infer that $f\in {\cal
U}$.
\end{enumerate}
\end{proof}
%\qed

{\bf Q}-continuous functions have many other properties that are 
reminiscent of continuous functions. We give next an example of such property:  

\begin{definition}
Function $f:{\bf R} \goesto {\bf R}$ is {\bf Q}-{\em differentiable at
$x_{0}$} if there exists $\lambda \in {\bf R}$ such that function
\[
g_{x_{0}}(x)= \left \{\begin{array}{ll}
                \frac{f(x)-f(x_{0})}{x-x_{0}}
                &\mbox{, for }x\neq x_{0} \\
             \lambda 
             &\mbox{, for }x=x_{0}, 
             \end{array}
      \right.
\] 

is {\bf Q}-continuous at $x_{0}$. The value $\lambda$ is called {\em
the {\bf Q}-derivative of $f$ at $x_{0}$} (we will write $\lambda =
f^{\prime}_{{\bf Q}}(x_{0})$)
\label{q-deriv}
\end{definition} 

\begin{theorem} 
If $f$ is {\bf Q}-derivable at $x_{0}$ then $f$ is {\bf Q}-continuous at $x_{0}$. 
\end{theorem}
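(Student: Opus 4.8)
The plan is to exploit the algebraic factorization
\[
f(x) = f(x_0) + (x - x_0)\, g_{x_0}(x),
\]
which holds for \emph{every} $x \in {\bf R}$: for $x \neq x_0$ this is merely the definition of $g_{x_0}$ in Definition~\ref{q-deriv}, while at $x = x_0$ both sides equal $f(x_0)$, irrespective of the value $\lambda = g_{x_0}(x_0)$, because the factor $x - x_0$ vanishes there. Thus $f$ is obtained from $g_{x_0}$ by multiplying by the affine function $x \mapsto x - x_0$ and adding the constant $f(x_0)$.

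First I would record two trivial instances of {\bf Q}-continuity: every continuous function is (left, right, bilaterally) {\bf Q}-continuous at every point — one simply takes $\overline{f}_{y,z}$ to be the restriction $f|_{[y,z]}$ in condition~(\ref{qc-cond}) — and in particular both the constant map $x \mapsto f(x_0)$ and the affine map $x \mapsto x - x_0$ are {\bf Q}-continuous at $x_0$ from the left and from the right. Since $g_{x_0}$ is {\bf Q}-continuous at $x_0$ by hypothesis, Theorem~\ref{first}(a) — closure of the class of functions {\bf Q}-continuous at a fixed point under sums, scalar multiples and products — applied to the displayed factorization yields immediately that $f$ is {\bf Q}-continuous at $x_0$.

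For completeness one can also make the extension explicit. Fix a side, say the left one, let $\epsilon > 0$ be the constant witnessing {\bf Q}-continuity of $g_{x_0}$ at $x_0$ from the left, and for $x_0 - \epsilon < y < z \leq x_0$ let $\overline{g}_{y,z} : [y,z] \to {\bf R}$ be the corresponding continuous extension of $g_{x_0}$ on the rational convex combinations of $y$ and $z$. Setting $\overline{f}_{y,z}(x) := f(x_0) + (x - x_0)\,\overline{g}_{y,z}(x)$ gives a continuous function on $[y,z]$, and at a point $x = \alpha y + (1-\alpha) z$ with $\alpha \in {\bf Q}\cap[0,1]$ we have $\overline{g}_{y,z}(x) = g_{x_0}(x)$, hence $\overline{f}_{y,z}(x) = f(x_0) + (x - x_0) g_{x_0}(x) = f(x)$ by the factorization; note that this covers the endpoint $x_0$ itself (reached when $z = x_0$, $\alpha = 0 \in {\bf Q}$), where the identity survives precisely because the linear factor $x - x_0 = 0$ annihilates the a priori unknown value $\overline{g}_{y,x_0}(x_0)$. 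The argument from the right is identical.

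There is no genuine obstacle here: the substance of the statement is the observation that the difference quotient $g_{x_0}$ is, by construction, exactly the object designed to transport the hypothesis, and that the passage from $g_{x_0}$ back to $f$ is an affine operation to which Theorem~\ref{first}(a) applies. The only point demanding a moment's care is the behaviour at the base point $x_0$ — in particular that it lies among the rational convex combinations when it is an endpoint — which is settled by the vanishing of the linear factor.
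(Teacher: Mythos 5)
Your proof is correct and follows essentially the same route as the paper: write $f(x)=f(x_{0})+(x-x_{0})\cdot g_{x_{0}}(x)$ and invoke the {\bf Q}-continuity of constants and of $x\mapsto x-x_{0}$ together with the closure properties of Theorem~\ref{first}(a). Your explicit construction of the extension $\overline{f}_{y,z}$ and the check at the endpoint $x_{0}$ only spell out details the paper leaves implicit.
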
 

\begin{proof}
We write the equation above as 
\[
f(x)=f(x_{0})+(x-x_{0})\cdot g_{x_{0}}(x)
\]
and then use the fact that constants are {\bf Q}-continuous, $x-x_{0}$ is {\bf Q}-continuous and Lemma~\ref{first}. 
\end{proof} 

\begin{theorem} 
The value $f^{\prime}_{{\bf Q}}(x_{0})$ of a function that is {\bf Q}-differentiable at $x_{0}$ is well defined (that is, there exists at most one completion value $\lambda$). 
\end{theorem}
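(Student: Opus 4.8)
The plan is to reduce uniqueness to Lemma~\ref{second} via the linearity part of Theorem~\ref{first}. Suppose, for contradiction, that $\lambda_{1}\neq \lambda_{2}$ are both admissible completion values; write $g^{(i)}$ for the function obtained from Definition~\ref{q-deriv} by setting its value at $x_{0}$ equal to $\lambda_{i}$, so that each $g^{(i)}:{\bf R}\rightarrow {\bf R}$ is {\bf Q}-continuous at $x_{0}$.

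First I would consider $h:=g^{(1)}-g^{(2)}=g^{(1)}+(-1)\cdot g^{(2)}$. For every $x\neq x_{0}$ the two functions share the common value $\frac{f(x)-f(x_{0})}{x-x_{0}}$, so $h(x)=0$ there, whereas $h(x_{0})=\lambda_{1}-\lambda_{2}$. By the closure of {\bf Q}-continuity under sums and scalar multiples (Theorem~\ref{first}(a)), $h$ is {\bf Q}-continuous at $x_{0}$.

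Then I would apply Lemma~\ref{second}: since $h$ vanishes on the punctured neighborhood ${\bf R}\setminus\{x_{0}\}$ of $x_{0}$, in particular $h=0$ locally (on both sides) around $x_{0}$, so the lemma gives $h(x_{0})=0$, i.e. $\lambda_{1}=\lambda_{2}$, the desired contradiction.

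I do not expect a genuine obstacle here; the one delicate point is the reading of Lemma~\ref{second}, namely that ``$f=0$ locally around $x_{0}$'' is meant in the punctured sense --- which is precisely what its proof exploits: choosing $y<z$ near $x_{0}$ with $x_{0}$ a rational convex combination $\alpha y+(1-\alpha)z$, one notes that the continuous extension $\overline{h}_{y,z}$ agrees with $h\equiv 0$ on the dense set $\{\alpha y+(1-\alpha)z:\alpha\in{\bf Q}\cap[0,1]\}\setminus\{x_{0}\}$ and hence is identically zero, forcing $h(x_{0})=\overline{h}_{y,z}(x_{0})=0$. Should one prefer to avoid invoking the lemma, the same argument can be run directly on $g^{(1)},g^{(2)}$ with $z=x_{0}$: their continuous extensions on $[y,x_{0}]$ coincide on the dense set $\{\alpha y+(1-\alpha)x_{0}:\alpha\in{\bf Q}\cap(0,1]\}$, hence everywhere, and in particular at $x_{0}$, yielding $\lambda_{1}=\lambda_{2}$.
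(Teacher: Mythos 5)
Your proof is correct and is essentially the paper's own argument: the paper likewise takes the difference of the two completion functions $g_{x_{0}}$, invokes the closure of {\bf Q}-continuity under sums and scalar multiples (Theorem~\ref{first}) to see this difference is {\bf Q}-continuous at $x_{0}$ while vanishing off $x_{0}$, and then applies Lemma~\ref{second} to conclude $\lambda=\mu$. Your explicit remark that Lemma~\ref{second} must be read in the punctured sense, justified by the dense-set/continuous-extension argument, only spells out what the paper's terse proof leaves implicit.
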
 
\begin{proof} 
Suppose there exist two completions $\lambda,\mu$.  By Lemma~\ref{first} 
the difference of the two functions $g_{x_{0}}$ is a {\bf Q}-continuous that is locally zero around $x_{0}$. By Lemma~\ref{second} it must be that $\lambda=\mu$. 
\end{proof} 

Our extension of notions of continuity/differentiability is not as strong as one might believe: {\bf no} additive function is  {\bf Q}-differentiable, other than the linear functions. 
\begin{theorem} 
Consider an additive function $f$ that is {\bf Q}-differentiable at $x_{0}=0$ and let $r=f^{\prime}_{{\bf Q}}(x_{0}).$ Then $f(x)=rx$ on an open neighborhood of zero. 
\end{theorem}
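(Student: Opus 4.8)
The plan is to exploit the fact that the completed difference quotient of an additive function at $0$ is constant along every punctured rational ray through the origin, and then to feed the $\mathbf{Q}$-continuity requirement with a \emph{degenerate} pair of endpoints, one of which is $0$ itself. For such a choice the rational-combination set appearing in~(\ref{qc-cond}) collapses to a dense subset of a single ray, which forces the continuous extension to be constant and thereby reads off the value of $f$ on a whole interval.

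Concretely, set $r=f^{\prime}_{\mathbf{Q}}(0)$. Since $f$ is additive, $f(0)=0$, so the completed difference quotient at $0$ is $g_{0}(x)=f(x)/x$ for $x\neq 0$ and $g_{0}(0)=r$, and by hypothesis $g_{0}$ is $\mathbf{Q}$-continuous at $0$ (from both sides). The elementary observation is that for every nonzero rational $q$ and every $x\neq 0$ we have $f(qx)=qf(x)$, hence $g_{0}(qx)=\tfrac{qf(x)}{qx}=\tfrac{f(x)}{x}=g_{0}(x)$; thus $g_{0}$ is constant on each punctured rational ray $\{\beta z:\beta\in\mathbf{Q}\setminus\{0\}\}$.

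Now invoke right-hand $\mathbf{Q}$-continuity of $g_{0}$ at $0$ with the degenerate left endpoint $y=0$ (permitted, since the one-sided condition in Definition~\ref{qc} allows the endpoint equal to $x$): there is $\epsilon_{+}>0$ so that for every $z\in(0,\epsilon_{+})$ there is a continuous $\overline{g_{0}}_{\,0,z}:[0,z]\to\mathbf{R}$ agreeing with $g_{0}$ on $\{(1-\alpha)z:\alpha\in\mathbf{Q}\cap[0,1]\}=\{\beta z:\beta\in\mathbf{Q}\cap[0,1]\}$. On the part with $\beta\neq 0$ this value is the constant $f(z)/z$ by the observation above, and that part is dense in $[0,z]$; by continuity $\overline{g_{0}}_{\,0,z}\equiv f(z)/z$ on all of $[0,z]$. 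Evaluating at the point $0$, which lies in the rational-combination set via $\alpha=1$, gives $f(z)/z=\overline{g_{0}}_{\,0,z}(0)=g_{0}(0)=r$, i.e. $f(z)=rz$ for every $z\in(0,\epsilon_{+})$.

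A mirror-image argument, using left-hand $\mathbf{Q}$-continuity of $g_{0}$ at $0$ with the degenerate right endpoint $z=0$, yields $\epsilon_{-}>0$ with $f(y)=ry$ for all $y\in(-\epsilon_{-},0)$. Together with $f(0)=0=r\cdot 0$, this gives $f(x)=rx$ on $(-\delta,\delta)$ with $\delta=\min(\epsilon_{-},\epsilon_{+})$, an open neighborhood of zero. No step is technically deep; the only subtlety is recognizing that the definition of one-sided $\mathbf{Q}$-continuity genuinely permits taking the endpoint equal to $x=0$, and it is precisely this degenerate choice that makes the rational-combination set dense in a single ray and pins the continuous extension down to a constant.
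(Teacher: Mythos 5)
Your proposal is correct and follows essentially the same route as the paper: the paper's proof also observes that $g_{0}$ is constant on rational multiples of $x$ and applies Definition~\ref{qc} with the degenerate pair $y=x$, $z=0$, so that the dense agreement set forces the continuous extension to be constant and equal to $g_{0}(0)=r$. Your write-up merely makes explicit the density argument and the treatment of both sides of $0$, which the paper leaves implicit.
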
 
\begin{proof} 
Consider $x$ sufficiently close to zero. Then function $g_{0}$ is {\bf Q}-continuous at zero and constant on the set 
$\{\alpha y: \alpha \in {\bf Q}\}$. By the definition of {\bf Q}-continuity, with  $y=x,z=0$ we infer that $g_{0}(x)=r$, i.e. $f(x)=rx$ around zero. 
\end{proof}

Kostyrko \cite{kostyrko} has employed the terminology ``type A property" to refer to continuity properties valid in the class of additive functions for linear functions only, and ``type B" for properties that are not of type A. Extending this language one could say that {\bf Q}-differentiability of functions is a ``locally type A property", while {\bf Q}-continuity is of type B. 

One way to turn {\bf Q}-differentiability into a type B property is to relax 
Definition~\ref{q-deriv}: rather than using as benchmarks for differentiability the linear functions, {\it we will use instead the set of all additive functions:}

\begin{definition} Given real function $f$ and additive function $H$ and $x_{0}\in {\bf R}$, we say that function $f$ is {\bf Q}-{\em differentiable at
$x_{0}$ with respect to $H$} if there exists $\lambda \in {\bf R}$ and a function $F_{x_{0}}$ with $F_{x_{0}}(x_{0})=\lambda$, {\bf Q}-continuous at $x_{0}$
such that function
\[
f(x)=f(x_{0})+[H(x)-H(x_{0})]\cdot F_{x_{0}}(x). 
\] 

The value $\lambda$ is called {\em the {\bf Q}-derivative of $f$ at $x_{0}$ with respect to $H$} (we will write $\lambda =
f^{\prime}_{{\bf Q},H}(x_{0})$)
\label{q-deriv2}
\end{definition} 

This change enables the beginnings of an (intriguing) differential calculus with respect to additive functions:  

\begin{theorem} 
If functions $f,g$ are {\bf Q}-differentiable at $x_{0}$ with respect to additive function $H$ and $\alpha\in {\bf R}$, then $f+g, \alpha\cdot f$ and 
$f\cdot g$ are differentiable at $x_{0}$ with respect to $H$ and 
\begin{align}
(f+g)^{\prime}_{{\bf Q},H}(x_{0})=f^{\prime}_{{\bf Q},H}(x_{0})+g^{\prime}_{{\bf Q},H}(x_{0})\\
(\alpha f)^{\prime}_{{\bf Q},H}(x_{0})=\alpha \cdot f^{\prime}_{{\bf Q},H}(x_{0})\\
(fg)^{\prime}_{{\bf Q},H}(x_{0})=f(x_{0})g^{\prime}_{{\bf Q},H}(x_{0})+
g(x_{0})f^{\prime}_{{\bf Q},H}(x_{0})\label{prod}
\end{align} 

\end{theorem}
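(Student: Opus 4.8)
The plan is to treat all three operations by a single recipe: starting from the identities of Definition~\ref{q-deriv2} for $f$ and $g$, I would rearrange algebraically until the factor $H(x)-H(x_0)$ is made explicit, read off the witness function for the combined operation, and then obtain its {\bf Q}-continuity at $x_0$ from Theorem~\ref{first} and its value at $x_0$ from a direct substitution. Fix witnesses $F_{x_0},G_{x_0}$, both {\bf Q}-continuous at $x_0$, with $F_{x_0}(x_0)=f^{\prime}_{{\bf Q},H}(x_0)$ and $G_{x_0}(x_0)=g^{\prime}_{{\bf Q},H}(x_0)$, such that
\[
f(x)=f(x_0)+[H(x)-H(x_0)]\,F_{x_0}(x),\qquad g(x)=g(x_0)+[H(x)-H(x_0)]\,G_{x_0}(x).
\]

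For $f+g$ and $\alpha f$ the computation is immediate: adding the two identities gives $(f+g)(x)=(f+g)(x_0)+[H(x)-H(x_0)]\,[F_{x_0}(x)+G_{x_0}(x)]$, and scaling one of them gives $(\alpha f)(x)=(\alpha f)(x_0)+[H(x)-H(x_0)]\,[\alpha F_{x_0}(x)]$. By Theorem~\ref{first}(a), $F_{x_0}+G_{x_0}$ and $\alpha F_{x_0}$ are {\bf Q}-continuous at $x_0$; evaluating them at $x_0$ gives $f^{\prime}_{{\bf Q},H}(x_0)+g^{\prime}_{{\bf Q},H}(x_0)$ and $\alpha f^{\prime}_{{\bf Q},H}(x_0)$, i.e.\ the first two identities.

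The product is the only step with real content. Multiplying the two identities and collecting powers of $H(x)-H(x_0)$ gives
\[
f(x)g(x)=f(x_0)g(x_0)+[H(x)-H(x_0)]\,\Phi_{x_0}(x),
\]
with
\[
\Phi_{x_0}(x)=f(x_0)G_{x_0}(x)+g(x_0)F_{x_0}(x)+[H(x)-H(x_0)]\,F_{x_0}(x)G_{x_0}(x);
\]
the quadratic term of the product has been absorbed into the last summand by extracting a single factor $H(x)-H(x_0)$. Since constant functions are {\bf Q}-continuous, $H$ is {\bf Q}-continuous by Theorem~\ref{first}(b), and sums and products of functions {\bf Q}-continuous at $x_0$ are again {\bf Q}-continuous at $x_0$ by Theorem~\ref{first}(a), the function $\Phi_{x_0}$ is {\bf Q}-continuous at $x_0$. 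Finally, since the last summand carries the factor $H(x_0)-H(x_0)=0$, we get $\Phi_{x_0}(x_0)=f(x_0)G_{x_0}(x_0)+g(x_0)F_{x_0}(x_0)=f(x_0)g^{\prime}_{{\bf Q},H}(x_0)+g(x_0)f^{\prime}_{{\bf Q},H}(x_0)$, which is identity~(\ref{prod}).

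The point that I expect to require genuine care is not any of the above, but the implicit assertion that the numbers $(f+g)^{\prime}_{{\bf Q},H}(x_0)$, $(\alpha f)^{\prime}_{{\bf Q},H}(x_0)$, $(fg)^{\prime}_{{\bf Q},H}(x_0)$ are well defined, i.e.\ that the value $\lambda$ in Definition~\ref{q-deriv2} is determined by $f$, $H$ and $x_0$. This is the $H$-analogue of the well-definedness theorem proved above for ordinary {\bf Q}-differentiability: two witnesses would differ by a function {\bf Q}-continuous at $x_0$ whose product with $H(x)-H(x_0)$ vanishes identically, so the difference is $0$ off the ${\bf Q}$-affine set $x_0+\ker H$. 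Unlike the linear case this set need not be a single point, so Lemma~\ref{second} cannot be applied directly; however, on any path $\{\alpha y+(1-\alpha)z:\alpha\in{\bf Q}\cap[0,1]\}$ with $y-z\notin\ker H$ it meets the path in at most one point, so choosing $y,z$ so (possible whenever $H$ is not identically zero) forces the continuous extension granted by {\bf Q}-continuity to vanish, whence $\lambda$ is unique. Granting this, the three formulas are exactly the values at $x_0$ of the witnesses $F_{x_0}+G_{x_0}$, $\alpha F_{x_0}$ and $\Phi_{x_0}$ built above.
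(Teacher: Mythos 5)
Your proof is correct and follows essentially the same route as the paper: the same witnesses $F_{x_0}+G_{x_0}$, $\alpha F_{x_0}$ and $\Phi_{x_0}(x)=f(x_0)G_{x_0}(x)+g(x_0)F_{x_0}(x)+[H(x)-H(x_0)]F_{x_0}(x)G_{x_0}(x)$, evaluated at $x_0$ (your $\Phi_{x_0}$ in fact corrects a typo in the paper's $L$, which repeats $f(x_0)G_{x_0}(x)$ in place of $g(x_0)F_{x_0}(x)$). Your additional remarks --- the explicit check of ${\bf Q}$-continuity of $\Phi_{x_0}$ via Theorem~\ref{first} and the sketch of well-definedness of the derivative with respect to $H$ (which the paper establishes only for the ordinary ${\bf Q}$-derivative) --- are sound supplements rather than a different approach.
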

\begin{proof} 
If 
\[
f(x)=f(x_{0})+[H(x)-H(x_{0})]\cdot F_{x_{0}}(x). 
\] 
and 
\[
g(x)=g(x_{0})+[H(x)-H(x_{0})]\cdot G_{x_{0}}(x). 
\] 
(where $F_{x_{0}},G_{x_{0}}$ are {\bf Q}-continuous at $x_{0}$, then 
\[
(f+g)(x)=(f+g)(x_{0})+[H(x)-H(x_{0})]\cdot (F_{x_{0}}+G_{x_{0}})(x) 
\] 
and 
\[
(\alpha\cdot f)(x)=\alpha \cdot f(x_{0})+[H(x)-H(x_{0})]\cdot (\alpha \cdot F_{x_{0}})(x). 
\] 
Finally
\begin{align}
(fg)(x) - f(x_{0})g(x_{0})  & =[H(x)-H(x_{0})]\cdot [f(x_{0})G_{x_{0}}(x) + f(x_{0})G_{x_{0}}(x) \nonumber\\
&  +[H(x)-H(x_{0})]\cdot F_{x_{0}}(x)\cdot G_{x_{0}}(x)]
\end{align} 
so let
\[
L(x)= f(x_{0})G_{x_{0}}(x) + f(x_{0})G_{x_{0}}(x) +[H(x)-H(x_{0})]\cdot F_{x_{0}}(x)\cdot G_{x_{0}}(x)
\] 
We have 
\[
(fg)(x) - f(x_{0})g(x_{0}) =[H(x)-H(x_{0})]\cdot L(x)
\]
and 
\[
L(x_{0})=f(x_{0})g^{\prime}_{{\bf Q},H}(x_{0})+
g(x_{0})f^{\prime}_{{\bf Q},H}(x_{0})
\]
therefore relation~(\ref{prod}) follows.
\end{proof}  

Even definition~(\ref{q-deriv2}) does not enlarge too much, though, the class of $H$-differentiable functions: 
\begin{theorem} Given two additive functions the following are equivalent: 
\begin{enumerate} 
\item $H_{1}$ is {\bf Q}-differentiable w.r.t. $H_{2}$. 
\item There exists a constant $r$ such that $H_{1}=rH_{2}$. 
\end{enumerate}
\label{diff}
\end{theorem}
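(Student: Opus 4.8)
The plan is to prove the non-trivial implication $(1)\Rightarrow(2)$; the converse is immediate, since if $H_1=rH_2$ then $H_1(x)=H_1(x_0)+[H_2(x)-H_2(x_0)]\cdot r$, and the constant function $r$ is {\bf Q}-continuous everywhere by Theorem~\ref{first}(a), so $H_1$ is {\bf Q}-differentiable with respect to $H_2$ at every point, with derivative $r$. So assume $H_1$ is {\bf Q}-differentiable with respect to $H_2$ at some $x_0$, witnessed by $\lambda\in{\bf R}$ and a function $F_{x_0}$, {\bf Q}-continuous at $x_0$ with $F_{x_0}(x_0)=\lambda$, such that
\[
H_1(x)=H_1(x_0)+[H_2(x)-H_2(x_0)]\cdot F_{x_0}(x).
\]

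First I would use additivity. Replacing $x$ by $x_0+t$ and using $H_i(x_0+t)=H_i(x_0)+H_i(t)$, the defining relation collapses to $H_1(t)=H_2(t)\cdot F_{x_0}(x_0+t)$ for all $t$. Now the key step: on any rational ray through the reference point, $F_{x_0}$ is forced to be constant. Indeed, fix any $t\neq 0$ with $H_2(t)\neq 0$ and any rational $q\in{\bf Q}$; by additivity $H_1(qt)=qH_1(t)$ and $H_2(qt)=qH_2(t)$, so $F_{x_0}(x_0+qt)=H_1(qt)/H_2(qt)=H_1(t)/H_2(t)$, independent of $q$. Thus $F_{x_0}$ takes the single value $r_t:=H_1(t)/H_2(t)$ on the set $\{x_0+qt:q\in{\bf Q}\setminus\{0\}\}$. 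Applying {\bf Q}-continuity of $F_{x_0}$ at $x_0$ on a short interval of the form $[x_0,\,x_0+\delta t]$ (a path of the type occurring in Definition~\ref{qc}, after reparametrizing: $\alpha y+(1-\alpha)z$ with $y=x_0+\delta t$, $z=x_0$ sweeps exactly $\{x_0+\alpha\delta t:\alpha\in{\bf Q}\cap[0,1]\}$), the continuous extension $\overline{F}_{y,z}$ agrees with the constant $r_t$ on a dense subset of $[x_0,x_0+\delta t]$, hence is identically $r_t$, hence $F_{x_0}(x_0)=r_t$. Therefore $r_t=\lambda$ for every $t$ with $H_2(t)\neq 0$, i.e. $H_1(t)=\lambda H_2(t)$ whenever $H_2(t)\neq 0$.

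It remains to handle the kernel: if $H_2(t)=0$ I must show $H_1(t)=0$. Here I would argue as follows. The set $K=\{t:H_2(t)=0\}=\ker H_2$ is a {\bf Q}-vector subspace of ${\bf R}$. On its complement we have $H_1=\lambda H_2$. Consider the additive function $G=H_1-\lambda H_2$; it vanishes outside $K$ and we must show it vanishes on $K$ as well. Pick $t\in K$; since $K$ is nowhere dense unless $K={\bf R}$ (a proper subspace of ${\bf R}$ over ${\bf Q}$, being the kernel of a nonzero additive functional, has empty interior), for any such $t$ and any $s\notin K$ we have $t+qs\notin K$ for all $q\in{\bf Q}\setminus\{0\}$ (as $K$ is a subspace), so $G(t+qs)=0$; letting $q\to 0$ rationally and using that $G$, being additive, is itself {\bf Q}-continuous at $t$ (Theorem~\ref{first}(b)) together with Lemma~\ref{second} applied to $G$ on the path $\{t+\alpha\delta s:\alpha\in{\bf Q}\cap[0,1]\}$, we get $G(t)=0$. (If instead $K={\bf R}$ then $H_2\equiv 0$, the defining relation forces $H_1\equiv 0$, and $H_1=0\cdot H_2$ trivially.) Hence $H_1=\lambda H_2$ everywhere, which is $(2)$ with $r=\lambda$.

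I expect the main obstacle to be the kernel case: the clean computation $F_{x_0}(x_0+qt)=H_1(t)/H_2(t)$ only makes sense where $H_2$ does not vanish, and one needs the separate {\bf Q}-continuity argument for $G=H_1-\lambda H_2$ (valid because additive functions are {\bf Q}-continuous, Theorem~\ref{first}(b)) plus Lemma~\ref{second} to kill $G$ on $\ker H_2$; care is also needed to verify that the paths used are genuinely of the form demanded by Definition~\ref{qc} and that the relevant points $x_0+qt$ are dense in the chosen short interval, which holds because $\{q\in{\bf Q}\cap[0,\delta]\}$ is dense in $[0,\delta]$.
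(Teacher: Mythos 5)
Your argument for the main implication is correct and is essentially the paper's own proof: use ${\bf Q}$-homogeneity of $H_1,H_2$ to make the continuous extension of $F_{x_0}$ constantly equal to $H_1(t)/H_2(t)$ on a rational-parameter path ending at $x_0$, then let the parameter tend to $0$ to get $H_1(t)/H_2(t)=\lambda$ whenever $H_2(t)\neq 0$ (the paper does this at $x_0=0$, you shift by additivity, which is the same thing). Your separate kernel paragraph is superfluous — the identity $H_1(t)=H_2(t)\cdot F_{x_0}(x_0+t)$ you already derived gives $H_1(t)=0$ immediately when $H_2(t)=0$ — and it is the only shaky part: a proper ${\bf Q}$-subspace such as $\ker H_2$ need not be nowhere dense (it is typically dense; only empty interior is true), and Lemma~\ref{second} as stated requires vanishing on an interval, not along a path, though the continuous-extension argument you sketch there does work.
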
 
\begin{proof} 
The reverse implication is easy. So let's deal with the direct one: 
suppose $H_{1}$ is differentiable w.r.t. $H_{2}$ at $x_{0}=0$, and define 
$r=(H_{1})^{\prime}_{{\bf Q},H_{2}}(0)$. We will show that $H_{1}=r\cdot H_{2}$. 
From differentiability we infer
\[
H_{1}(x)=H_{2}(x)\cdot g_{0}(x)
\]
where function $g_{0}$ is {\bf Q}-continuous at $x_{0}=0$. Consider,  $x\in R$ (assume w.l.o.g. $x<0$) such that $H_{2}(x)\neq 0$, and let $\alpha\in {\bf Q}$ such that relation~(\ref{qc-cond}) holds for function $g_{0}$ and parameters $y=\alpha\cdot x$, $z=0$.

Since $H_{1}(\beta\cdot x)= \beta\cdot H_{1}(x)$ and similarly for $H_{2}$, we infer that for $\beta\in [0,\alpha]\cap {\bf Q}$ and continuous extension $\overline{g}_{[0,y]}$ of $g_{0}$
\begin{equation} 
\overline{g}_{[0,y]}(\beta\cdot x)=\frac{H_{1}(x)}{H_{2}(x)}
\label{foo} 
\end{equation} 
Taking the limit $\beta\rightarrow 0, \beta\in {\bf Q}$ in equation~(\ref{foo}) above we infer 
\[
\frac{H_{1}(x)}{H_{2}(x)}=(H_{1})^{\prime}_{{\bf Q},H_{2}}(0)=r.
\]

\end{proof} 

\section{Universally bad additive functions and the additive analogue of a hierarchy}

In \cite{Ist2} we have studied the class ${\cal C}+{\cal D}$, of
functions that are the sum of a continuous and a Darboux function,
with the main purpose of understanding its relationship with the class
${\cal U}$, of functions that are the uniform limit of a sequence of
Darboux functions. It is known that ${\cal C}+{\cal D}\subset {\cal
U}$, and the inclusion is strict. It is also known \cite{Ist1} that
${\cal U}$ is closed under {\em quasi-uniform}
(Arzel\'{a}-Gagaeff-Alexandrov) convergence, defined as follows: 

\begin{definition} 
Let $(X,\rho )$ and $(Y, \sigma )$ be two metric spaces, and let 
$f, f_{n}:X\rightarrow Y$ be functions. Sequence $(f_{n})_{n\geq 1}$ is said to {\em quasi-uniformly converge} to $f$ iff
\begin{itemize}
\item [(i)] $f_{n}$ converges pointwise to  $f$.
\item [(ii)] For every $\epsilon >0$ there exists a sequence (possibily finite)
 of indices $n_{1}<n_{2}<\ldots <n_{p}<\ldots $ and a corresponding sequences of open sets
$G_{1}<G_{2}<\ldots <G_{p}<
\ldots $ s.t. $X=\bigcup _{i} G_{i}$ and for all $i\geq 1$,
$$ x\in G_{i}\Rightarrow \sigma (f(x), f_{n_{i}}(x))<\epsilon .$$ We will employ the notation
$f_{n} \stackrel{AGA}{\rightarrow } f$.
\end{itemize}
\end{definition}

Hence ${\cal U}$ it is also the
closure of ${\cal D}$ under this type of convergence. If we denote by
$U\cdot {\cal A},QU\cdot {\cal A}$ the closure of a class ${\cal A}$
under uniform (quasi-uniform) convergence, the above mentioned result
reads
\begin{equation}
\label{hierarchy}
{\cal C}+{\cal D}\subset \mathcal{U}=U\cdot {\cal D}=QU\cdot {\cal D}.
\end{equation}

An interesting variation on these classes considers an additional
restriction on the functions involved: being {\em additive}. We will
denote by ${\cal H}$ the class of additive functions  and, for a class of functions ${\cal A}$, by ${\cal
AH}$ the class ${\cal A}\cap {\cal H}$.

In the sequel we study the analogue of hierarchy~(\ref{hierarchy}) when
the extra constraint of additivity is imposed. The resulting hierarchy 
does {\em not} simply mirror~(\ref{hierarchy}), if only for the
following reason: as we have seen in the previous section, ${\cal H}\subset {\cal U}$, so in fact
${\cal UH}={\cal H}$. On the other hand this class is easily seen {\em
not} to be equal to $U\cdot {\cal DH}$ (since uniform convergence of
additive functions is trivial). Finally, the comparison with $QU\cdot
{\cal DH}$ is nontrivial.

\begin{definition}
For $A \subset {\bf R}$ denote by ${\cal D}^{*}(A)$ the class of
functions $f:{\bf R}\longrightarrow {\bf R}$ such that for every
interval $I\subset {\bf R}$, $f(I)=A$.
\end{definition}

For instance ${\cal DH}={\cal D}^{*}({\bf R})$. Indeed, the graph of every additive function is dense in ${\bf R}\times {\bf R}$. If it has the Darboux property then it must take every possible value in every interval. 

We provide below examples of ``universally bad" functions in the class of additive functions, in a manner similar in spirit to results in \cite{BAN96},\cite{Ist2}, \cite{natkaniec-kircheim}: 

\begin{theorem} 
If $A\subsetneq {\bf R}$ is a vector space with $|A|={\bf c}$ then
$\emptyset \neq {\cal DH}^{*}(A)\not \subset {\cal CH}+{\cal DH}.$ 
\label{two}
\end{theorem}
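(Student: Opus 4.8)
The plan is to establish the two nontrivial assertions of Theorem~\ref{two} separately: first, that ${\cal DH}^{*}(A)\neq\emptyset$ for every ${\bf Q}$-vector space $A\subseteq{\bf R}$ with $|A|={\bf c}$; second, that when $A\subsetneq{\bf R}$, at least one function in ${\cal DH}^{*}(A)$ fails to be of the form (linear)$+$(additive Darboux), i.e.\ lies outside ${\cal CH}+{\cal DH}$.

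For nonemptiness, I would build an additive $f$ with $f(I)=A$ for every interval $I$ by a transfinite/Hamel-basis construction. Fix a Hamel basis $B$ of ${\bf R}$ over ${\bf Q}$, enumerate the ${\bf c}$ many pairs $(I,a)$ with $I$ a rational-endpoint interval and $a\in A$, and along the way pick, at stage $\xi$, a fresh basis element $b_\xi\in I_\xi$ not yet used, setting $f(b_\xi)$ to be an element of $A$ chosen so that the ${\bf Q}$-linear combination realizing $a_\xi$ works out; since $A$ is a ${\bf Q}$-vector space the partial ${\bf Q}$-linear maps stay consistent and the final extension (arbitrary on the remaining basis elements, but valued in $A$) is additive with range exactly $A$ on every interval. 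The only care needed is bookkeeping so that at each stage enough ``room'' remains in the basis — this is routine since each $I$ contains ${\bf c}$ basis elements and only ${<}{\bf c}$ have been used. A cleaner alternative: take any additive surjection $g$ onto $A$ that is ``${\cal D}^*$ onto $A$'' by a standard Hamel argument, which is essentially the construction of a Darboux additive function but with target vector space $A$ in place of ${\bf R}$.

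For the separation ${\cal DH}^{*}(A)\not\subseteq{\cal CH}+{\cal DH}$ when $A\subsetneq{\bf R}$, the key observation is that a linear function is $c\mapsto rc$ and an additive Darboux function $d$ has $d(I)={\bf R}$ for every interval $I$ (as noted, ${\cal DH}={\cal D}^*({\bf R})$). So if $f=\ell+d\in{\cal CH}+{\cal DH}$ with $\ell(x)=rx$, then on any interval $I$ we would have $f(I)=rI+{\bf R}={\bf R}$, forcing the range of $f$ on intervals to be all of ${\bf R}$ — contradicting $f\in{\cal DH}^{*}(A)$ with $A\neq{\bf R}$. Hence in fact \emph{every} function in ${\cal DH}^{*}(A)$ lies outside ${\cal CH}+{\cal DH}$, which is stronger than what is claimed and dispatches the inclusion part immediately; combined with nonemptiness this gives $\emptyset\neq{\cal DH}^{*}(A)\not\subseteq{\cal CH}+{\cal DH}$.

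The main obstacle is the nonemptiness construction: one must verify that the transfinite recursion can always proceed, i.e.\ that after fewer than ${\bf c}$ steps the chosen basis elements do not exhaust the basis elements lying in the next target interval, and that the $f$-values assigned remain a consistent ${\bf Q}$-linear assignment into $A$ (this uses that $A$ is closed under ${\bf Q}$-linear combinations — hence the hypothesis that $A$ is a vector space, not merely a set of size ${\bf c}$). I would also double-check the edge case $A=\{0\}$ is excluded by $|A|={\bf c}$, and that ``$f(I)=A$ for every interval'' is genuinely achievable rather than only ``$f(I)\supseteq A$'': the $\supseteq$ direction comes from the enumeration of targets, and $\subseteq$ is automatic because all assigned (and default) basis values lie in the ${\bf Q}$-vector space $A$, so every value of $f$ does.
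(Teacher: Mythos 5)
Your nonemptiness argument is fine in outline and matches the spirit of the paper's construction (a transfinite assignment on a Hamel basis with values in $A$). The problem is the second half, which is the actual content of the theorem. You claim that if $f=\ell+d$ with $\ell(x)=rx$ and $d\in{\cal DH}$, then $f(I)=rI+{\bf R}={\bf R}$ for every interval $I$. This confuses the image of a pointwise sum with a Minkowski sum: $f(I)=\{rx+d(x):x\in I\}$, where $\ell$ and $d$ are evaluated at the \emph{same} point $x$, so one only gets $f(I)\subseteq rI+d(I)$, and no conclusion about surjectivity of $f$ on $I$ follows. In fact your stronger assertion --- that \emph{every} member of ${\cal DH}^{*}(A)$ lies outside ${\cal CH}+{\cal DH}$ --- is false: fixing $r\neq 0$, one can run a transfinite construction very much like your nonemptiness one, meeting two families of requirements (for each rational interval $J$ and each $a\in A$, some $x\in J$ with $h(x)=a$; for each $J$ and each $t\in{\bf R}$, some $x\in J\cap r^{-1}(A-t)$ with $h(x)=rx+t$, which is possible since $r^{-1}(A-t)$ is a dense coset of a ${\bf Q}$-subspace of cardinality ${\bf c}$), and then extend with values in $A$. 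The resulting $h$ satisfies $h(I)=A$ on every interval, yet $d(x)=h(x)-rx$ is onto ${\bf R}$ on every interval, so $h\in{\cal DH}^{*}(A)\cap({\cal CH}+{\cal DH})$. So the non-inclusion cannot be obtained ``for free'' from the definition of ${\cal DH}^{*}(A)$; it requires exhibiting a carefully built witness.

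This is exactly what the paper does, and what is missing from your proposal: it enumerates all nonzero linear functions $g_{\alpha}(x)=r_{\alpha}x$ and constructs a single $h\in{\cal DH}^{*}(A)$ by transfinite recursion over a Hamel basis dense in ${\bf R}$, reserving at stage $\alpha$ a value $y_{\alpha}$ chosen outside the ${\bf Q}$-vector space generated by all numbers of the form $x_{\beta}-g_{\alpha}(t_{\beta,i})$ and $z_{\beta}-g_{\alpha}(h_{\beta})$ (the eventual values of $h-g_{\alpha}$ on the basis). This guarantees $y_{\alpha}\notin\mathrm{range}(h-g_{\alpha})$, so $h-g_{\alpha}$ is not surjective and hence not in ${\cal DH}$, for every $\alpha$ simultaneously; the case of a zero linear part is handled separately, since then $h$ itself would be in ${\cal DH}={\cal D}^{*}({\bf R})$, contradicting $h(I)=A\neq{\bf R}$. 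To repair your proof you would need to add such a diagonalization against all linear functions (interleaved with the density requirements ensuring $h(I)=A$), verifying at each stage that only countably many constraints have been imposed so that suitable independent basis elements and values remain available --- which is where the hypothesis $|A|={\bf c}$ and the bookkeeping of the paper's construction are actually used.
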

\begin{proof} 
Let $\{r_{\alpha}\}_{\alpha <{\bf c}}$  be an enumeration
of ${\bf R}^{*}$, where $\alpha $ ranges over all countable ordinals, and $g_{\alpha}(x)=r_{\alpha}\cdot x \in {\cal CH}$ be an enumeration of nontrivial linear functions. Finally, let $\{U_{i}\}_{i<\omega }$ be a basis for the usual topology on {\bf R}.

Let
$H=\{h_{\alpha}\}_ {\alpha <{\bf c}}$ be a Hamel base, dense in {\bf
R}. The existence of such a base follows easily: starting from a Hamel basis $H_{1}$, if $H_{1}$ is not already dense then we can modify its terms by subtracting rational numbers so that at least one term falls into each $U_{i}$.

Let
$B=\{x_{\alpha}\}_{\alpha <{ \bf c}}$ be a basis for A (it is clear
that $|B|={\bf c}$). 

Define inductively sequences $y_{\alpha}
\in
{\bf R}, z_{\alpha}\in B, t_{\alpha, i}\in H\cap U_{i}, 
\alpha <{\bf c}, i<\omega $ so that 
\begin{equation}
t_{\alpha,i}=t_{\gamma,j} \Rightarrow (\alpha,i)=(\gamma,j)
\label{f}
\end{equation}
\begin{equation}
y_{\alpha}\not \in <x_{\beta}-g_{\alpha}(t_{\beta ,i}), z_{\beta}-g_{\alpha}
(h_{\beta})>_{\beta, i}
\label{s}
\end{equation}

The construction is presented in the transfinite algorithm below. 

\begin{pseudocode}[shadowbox]{Construction of }{y_{\alpha}, z_{\alpha},t_{\alpha,i}}
\mbox{Assume we have defined }t_{\beta , i}, z_{\beta}, y_{\beta}, 
\forall \beta <\alpha , \forall i<\omega.\\
\\
\mbox{\bf \underline{The construction of $y_{\alpha}$:}}\\
\\
\mbox{Since the {\bf Q}-vector space }
<x_{\beta}-g_{\alpha}(t_{\beta ,i}), z_{\beta}-g_{\alpha}(h_{\beta})>_
{\beta<\alpha, i<\omega}\\
\mbox{ is countable, we can chose  }y_{\alpha}\mbox{ outside this space.} \\
\\
\mbox{\underline{\bf The construction of $t_{\alpha ,j}, (j<\omega)$:}}\\
\\
\mbox{ Suppose we have defined }t_{\alpha ,k}, \forall k<j. \\
\\
\mbox{ The set }  A_{j}=\{t\in H\cap U_{j}|x_{\alpha}-g_{\alpha}(t)\in 
<x_{\beta}-g_{\alpha}(t_{\beta ,k}), \\ z_{\beta}-g_{\alpha}(h_{\beta})
, y_{\beta}, z_{\beta},\mbox{\newline} y_{\alpha}, x_{\alpha}-g_{\alpha}(t_{
\alpha ,i})>_{\beta <\alpha, i<j, k<\omega}\}
\\
\mbox{ is at most countable }
\mbox{(since the {\bf Q}-vector space in its definition }\\
\mbox{ is at most countable as well), therefore we can choose }\\ 
t_{\alpha ,j}\in
(H\cap U_{j})\backslash A_{j}.\\
\\

\mbox{\bf \underline{The construction of $z_{\alpha}$:}}\\
\\
\mbox{ The set } 
B_{\alpha}:=\{z\in B |z-g_{\alpha}(h_{\beta})\in 
<x_{\eta}-g_{\alpha}(t_
{\eta ,k}), \\ z_{\beta}-g_{\alpha}(h_{\beta}), y_{\eta}, 
z_{\beta}>_{k<\omega ,\eta \leq \alpha ,\beta <\alpha }\}
\mbox{ is at most countable,}\\ 
\mbox{ therefore (since B has cardinal {\bf c})} 
\mbox{ there exists }z_{\alpha}\in B\backslash B_{\alpha}.\\

\end{pseudocode}

(\ref{f}) results from our choice of $t_{\alpha ,i}$.

To prove (\ref{s}), assume $y_{\alpha}$ is a finite linear combination of
elements from the vector space on the right side of 
(\ref{s}).

\begin{itemize} 
\item[(i). ] if all elements on the right-hand side of (\ref{s}) have rank less that $\alpha$
we obtain a contradiction with our choice of  $y_{\alpha}$.

\item[(ii). ] otherwise,  let $\gamma >\alpha$ be the maximal rank in the linear combination. Considering 
the maximal rank term (one  of $z_{\gamma}-
g_{\alpha}(h_{\gamma}), x_{\gamma}-g_{\alpha}(t_{\gamma ,i}), \mbox{ for some }i<\omega $) in the linear decomposition defining $y_{\alpha}$. 

If it is the first term, then by turning the inequality around, we can obtain $z_{\gamma}-g_{\alpha}(h_{\gamma})$ as a finite linear combination with rational coefficients of $y_{\alpha}$, $x_{\gamma}-g_{\alpha}(t_{\gamma ,i})$ and lower order terms on the right-hand side of~(\ref{s}). But this contradicts the way we defined $z_{\gamma}$. 

Suppose term $z_{\gamma}-g_{\alpha}(h_{\gamma})$ does not appear in the finite linear combination defining $y_{\alpha}$ and, instead, the maximum rank term is $x_{\gamma}-g_{\alpha}(t_{\gamma ,i})$ (for some $i$). Turning the inequality around we write $x_{\gamma}-g_{\alpha}(t_{\gamma ,i})$ as a finite linear combination with rational coefficients of $y_{\alpha}$ and lower order terms on the right-hand side of~(\ref{s}). But this contradicts the way we defined $t_{\gamma,i}$. 
 
\end{itemize} 
%%%%%%%%%%%%%%%PANA AICI TI-A FOST!%%%%%%%%%%%%%%%%%%%%%%%%%%%%%%%%%%%%%%%%

Define $h:H\rightarrow {\bf R}$ by
\begin{equation}
h(x)=\left \{\begin{array}{ll}
             x_{\beta},  &\mbox{ if }x=t_{\beta ,i}\mbox{ with }\beta <
             {\bf c}, i<\omega \\
             z_{\beta},  &\mbox{ if }x=h_{\beta} \not \in \{t_{\gamma ,
             i}|\gamma <{\bf c}, i<\omega \}
             \end{array}
      \right.
\end{equation}
It is easily seen that $h(H)=B$ and $h$ is not injective. Extending by linearity $h$ to 
all of {\bf R},  we obtain %\textcolor{red}{(by employing Lemma 1.4)} 
a  function $h\in {\cal D}^
{*}{\cal H}(A)$. 

Supppose that function $h$ can be written as a sum $h=f_{1}+f_{2}\mbox{ with }f_{1}\in
{\cal CH}, f_{2}\in {\cal DH}$.

{\bf Case 1:}

If $f_{1}$ were the zero function then it would follow that $h\in {\cal DH}$, a contradiction, since $A\neq {\bf R}$.

{\bf Case 2:}

Suppose that $f_{1}=g_{\alpha }\mbox{ with }\alpha <{\bf c}.$ Then the restriction of function $f_{2}(x)=h(x)-g_
{\alpha}(x)$ to Hamel basis $H$ is 
\begin{equation}
f_{2}|_{H}(x)=\left \{\begin{array}{ll}
                x_{\beta}-g_{\alpha}(t_{\beta ,i}),
                &\mbox{ if }x=t_{\beta ,i}\mbox{ for some }\beta <
             {\bf c}, i<\omega, \\
             z_{\beta}-g_{\alpha}(h_{\beta }),
             &\mbox{ if }x=h_{\beta} \not \in \{t_{\gamma ,
             i}|\gamma <{\bf c}, i<\omega \}.
             \end{array}
      \right.
\end{equation}

Since $f_{2}$ is linear and $y_{\alpha}$ does not belong to the ${\bf Q}$-vector space generated by numbers  $<x_{\beta}-g_{\alpha}(t_{\beta ,i}), z_{\beta}-
g_{\alpha}(h_{\beta})>_{\beta, i}$, it follows that $y_{\alpha}\not \in range(f_{2})$, therefore 
$f_{2}\not \in {\cal DH}$, contradicting our assumption about $h$. 

In conclusion, $h\in {\cal D}^{*}{\cal H}(A)\backslash ({\cal CH}+{\cal DH})$.
\end{proof} 
%\qed

\begin{corollary} 
The inclusion ${\cal CH}+{\cal DH}\subset {\cal H}$ is strict. 
\label{foo2} 
\end{corollary}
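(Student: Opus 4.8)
The plan is to deduce this immediately from Theorem~\ref{two}, the only work being to exhibit a single suitable subspace $A$ and to observe the two trivial containments that frame the statement.

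First I would record the easy inclusion ${\cal CH}+{\cal DH}\subseteq {\cal H}$: both ${\cal CH}$ and ${\cal DH}$ consist of additive functions, and the sum of two additive functions is additive, so every element of ${\cal CH}+{\cal DH}$ lies in ${\cal H}$. Thus it remains to produce a function in ${\cal H}\setminus({\cal CH}+{\cal DH})$. Next I would manufacture a proper ${\bf Q}$-subspace of ${\bf R}$ of full cardinality: fix a Hamel basis ${\cal B}$ of ${\bf R}$ over ${\bf Q}$, choose one element $b_{0}\in {\cal B}$, and set $A=\langle {\cal B}\setminus\{b_{0}\}\rangle_{{\bf Q}}$. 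Since $|{\cal B}|={\bf c}$ we have $|{\cal B}\setminus\{b_{0}\}|={\bf c}$, hence $|A|={\bf c}$; and $b_{0}\notin A$, so $A\subsetneq {\bf R}$. Therefore $A$ satisfies the hypotheses of Theorem~\ref{two}.

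Applying Theorem~\ref{two} to this $A$ gives a function $h\in {\cal DH}^{*}(A)$ with $h\notin {\cal CH}+{\cal DH}$. By the very definition of ${\cal DH}^{*}(A)$, $h$ is additive, so $h\in {\cal H}$. Combining this with the inclusion recorded above, $h$ witnesses ${\cal CH}+{\cal DH}\subsetneq {\cal H}$, which is the assertion of the corollary.

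There is essentially no obstacle here: the entire content has been absorbed into Theorem~\ref{two}, and the corollary is a one-line specialization. The only points that require (minimal) care are checking that a proper ${\bf Q}$-subspace of cardinality ${\bf c}$ actually exists — handled by removing a single basis vector from a Hamel basis — and noting that membership in ${\cal DH}^{*}(A)$ already forces additivity, so the witness produced by Theorem~\ref{two} indeed lands in ${\cal H}$.
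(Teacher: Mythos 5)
Your proof is correct and follows the same route the paper intends: the corollary is a direct specialization of Theorem~\ref{two}, with the only extra work being the (easy) existence of a proper ${\bf Q}$-subspace $A\subsetneq{\bf R}$ of cardinality ${\bf c}$ and the trivial inclusion ${\cal CH}+{\cal DH}\subseteq{\cal H}$, both of which you handle properly. The paper leaves these details implicit, so your write-up simply makes the same argument explicit.
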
 

Putting all things together we obtain a different hierarchy for the 
additive case: 
\begin{corollary}
We have
\[
{\cal DH}=U\cdot {\cal DH}\subseteq QU\cdot {\cal DH}\subseteq {\cal CH}+{\cal DH}
\subset {\cal H}.
\]
\end{corollary}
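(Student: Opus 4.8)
The plan is to treat the four links of the displayed chain separately; three are routine, and only $QU\cdot{\cal DH}\subseteq{\cal CH}+{\cal DH}$ carries real content. The workhorse throughout will be the classical fact (see \cite{kuczma}) that an additive function bounded on a nonempty open interval is continuous, hence linear, together with its degenerate special case that a \emph{globally} bounded additive function vanishes identically; I will also repeatedly use that a pointwise limit of additive functions is additive (the Cauchy equation passes to pointwise limits).

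For the equality ${\cal DH}=U\cdot{\cal DH}$ I would argue as follows. The inclusion $\supseteq$ is witnessed by the constant sequence. For $\subseteq$, if $f_n\in{\cal DH}$ and $f_n\to f$ uniformly, then $f$ is additive, so $f-f_n$ is additive for each $n$; choosing $n$ with $\sup_x|f(x)-f_n(x)|<1$ makes $f-f_n$ a bounded additive function, hence $f-f_n\equiv 0$, i.e.\ $f=f_n\in{\cal DH}$. Next, ${\cal DH}\subseteq QU\cdot{\cal DH}$ is immediate (the constant sequence AGA-converges to its value), which together with the previous point gives ${\cal DH}=U\cdot{\cal DH}\subseteq QU\cdot{\cal DH}$. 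Finally, ${\cal CH}+{\cal DH}\subseteq{\cal H}$ holds because a sum of additive functions is additive, and the inclusion is strict by Corollary~\ref{foo2} (itself a consequence of Theorem~\ref{two}). So it only remains to prove $QU\cdot{\cal DH}\subseteq{\cal CH}+{\cal DH}$.

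For that last inclusion, let $f_n\in{\cal DH}$ with $f_n\stackrel{AGA}{\rightarrow}f$. Since AGA convergence implies pointwise convergence, $f$ is additive. Applying the definition of AGA convergence with $\epsilon=1$ produces indices $n_1<n_2<\cdots$ and open sets $G_1,G_2,\dots$ covering ${\bf R}$ such that $x\in G_i\Rightarrow|f(x)-f_{n_i}(x)|<1$. As the $G_i$ cover ${\bf R}$, at least one of them, say $G_{i_0}$, is nonempty and hence contains an open interval $(a,b)$; on $(a,b)$ the additive function $f-f_{n_{i_0}}$ is bounded, hence linear, so $f-f_{n_{i_0}}\in{\cal CH}$ and $f=(f-f_{n_{i_0}})+f_{n_{i_0}}\in{\cal CH}+{\cal DH}$. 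The one step that takes a moment's thought — and which I would flag as the ``main obstacle'', though it is a small one — is recognizing that AGA convergence already hands us an honest \emph{member} $f_{n_{i_0}}$ of the approximating sequence that is uniformly within $1$ of $f$ on a whole interval, so that $f$ splits as a linear function plus that member with no further construction. Chaining the four links then yields the corollary.
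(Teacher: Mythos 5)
Your proposal is correct and follows essentially the paper's own route: the only substantive link, $QU\cdot{\cal DH}\subseteq{\cal CH}+{\cal DH}$, is obtained exactly as in the paper by applying the definition of quasi-uniform convergence to extract an open set on which $f-f_{n_{i_0}}$ is bounded, hence a continuous additive function, giving the splitting $f=(f-f_{n_{i_0}})+f_{n_{i_0}}$, with strictness of the last inclusion supplied by Corollary~\ref{foo2}. The remaining links (${\cal DH}=U\cdot{\cal DH}$ via triviality of uniform convergence of additive functions, and the trivial inclusions) are treated by the paper as immediate, and your explicit verifications of them are fine.
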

\begin{proof}
We prove that the quasi-uniform limit of a sequence of additive Darboux functions is in ${\cal CH}+{\cal DH}$, then we apply Corollary~\ref{foo2}. 

Let $\{g_{n}\}_{n=1}^{\infty},\mbox{ } g_{n}\in {\cal DH}$, be a  sequence of functions
 quasi-uniformly convergent towards $g$. Clearly $g\in {\cal H}$.
 Let $\epsilon >0$ and consider the neighborhood $U$ of $0$
 and the index $n_{\epsilon}\geq 1$ s.t. $|g(x)-g_{n_{\epsilon}}(x)|
<\epsilon ,\forall x\in U$. Since $g-g_{n_{\epsilon}}$ is an additive function
bounded in a neighbourhood of 0, it is a continuous function, therefore 
$g=(g-g_{n_{\epsilon}})+g_{n_{\epsilon}}\in {\cal CH}+{\cal DH}$.
\end{proof}

\section{A personal recollection} 

If memory serves me well I first met professor Solomon Marcus in the spring of 1989. I was 19, in that year between high-school and college when one had to  complete nine-months military service, mandatory in communist Romania. The preceding year, still in high school, in a period of significant personal growth (I ranked first that year in the National Mathematical Olympiad in my age category), I became aware of the impending transition to a professional mathematician's career and started wondering what my (mathematical) future will be. 

I was, at that time, somewhat aware of the basics of college-level mathematical education. Through my father, a high school mathematics teacher, I had access to most of the standard college-level textbooks in use at the University of Bucharest. I (thought I) was reasonably well acquainted with first-year calculus and had been attempting to become familiar with the basics of complex analysis and its applications to analytic number theory. 

It wasn't meant for me to continue on that path. My scientific beginnings lie solidly  in Professor Marcus's range of scientific interests: While still in highschool I had read professor Marcus's books  \cite{marcus1967notiuni} and  \cite{marcus1975din}. I was struck by the (somewhat quaint nowadays) elegance and charm of the exotic properties in the theory of real functions, of the type investigated by Waclaw Sierpinski or Andrew Bruckner, in Romania by Simion Stoilow, Alexandru Froda and Professor Marcus himself. I became interested in generalizations of the intermediate value (a.k.a. Darboux) property, and sent professor Marcus a letter essentially containing what was to become later \cite{Ist1} one of my first scientific papers published (in 1991/92) outside Romania. 

More importantly, during the summer of 1989 I had made the (crucial, for my career) connection to theoretical computer science: by sheer hazard, in the summer of 1988 I had stumbled (in an used bookstore in my hometown, Gala\c{t}i) upon two research-level monographs on computability theory and formal languages written by two of Professor Marcus's disciples: Cris Calude \cite{calude1982complexitatea} and Gheorghe P\~{a}un \cite{paun1982gramatici}, and (thought I had) solved one open problem from each textbook. By the time I entered college I was the author of two scientific papers in theoretical computer science \cite{MR1087591,MR1100346} and the basic direction of my scientific path had pretty much been decided. 

When I entered the University of Bucharest in the fall of 1989 I was {\bf not} a  student of Professor Marcus: I simply was assigned to a different series which, on the other hand, had Cris Calude as professor. I continued to work in the research group of Marcus, Calude, P\~{a}un,  realizing soon enough that I was more interested in the research I was doing ``on the side" than in the (fairly different) courses I was taking in the Faculty of Mathematics. 

Communism fell, and with its fall came the possibility of unhindered publication outside Romania: Until 1994, the year I had left to pursue a Ph.D. in the United States I was an author of several papers \cite{determining-stationary,kn:CaludeIstrateZimand,istrate1994self,istrate1994some}. More importantly, with it came the possibility of working abroad. This had an unfortunate impact on the composition of the research group I belonged to: in 1992 Cris left for New Zealand. After getting a Ph.D. at the University of Bucharest (under the supervision of Cris) 
 Marius Zimand, one of the members of the Marcus group and coauthor on paper \cite{kn:CaludeIstrateZimand}, became in 1993 a Ph.D. student in Computer Science at the University of Rochester. I followed him in the fall of 1994, among the first in what later became a mass exodus of Romanian scientists. 

As I was completing my undergraduate studies at the University of Bucharest I wrote a  thesis \cite{Ist3} on the theory of real functions under the guidance of Professor Marcus. Its purpose was to create a survey (based on the literature available in  pre-internet 1994 Romania) of the theory of generalized versions of the Darboux property. More importantly, it contained some original results, some of them (eventually) published in Real Analysis Exchange \cite{Ist1,Ist2}. A few of these results had, however, remained confined to my thesis.  

More than 15 years after my college graduation (as Professor Marcus visited Timi\c{s}oara as an invited speaker of SYNASC 2010) I was shocked to find out that he still remembered that some of the original results in my thesis had not been published. He suggested that I should revisit them, and eventually submit them for publication. 

The purpose of this paper is to graciously answer this request. It is fitting to dedicate them to a man that has influenced my career in so many profound ways. Professor Marcus has undoubtedly had many students and disciples. While my American experience led my research career on a set of paths very different from those of my scientific beginnings (a recent sample is \cite{kneser}), I would like to point out that I am probably one of the few students of Professor Marcus to have worked in several of his major research directions: real analysis \cite{Ist1,Ist2}, formal languages \cite{MR1087591, istrate-etol}, combinatorics on words \cite{istrate1994self, istrate1994some}, recursive function theory \cite{determining-stationary,kn:CaludeIstrateZimand}. This paper adds to this heritage. 

I was extremely privileged to have been part of the circle of people influenced by Professor Marcus during these more than 25 years. 
To him, all my gratitude and my best wishes. 

\section*{Conclusions and Acknowledgment} 

Some of the concepts in this paper would deserve, we believe, further investigations, e.g. the (two versions of) {\bf Q}-differentiability. For instance, a natural question is whether the {\bf Q}-derivatives of a {\bf Q}-differentiable function satisfy some weak version of Darboux property. Another question raised by Theorem~\ref{diff} is whether one could somehow represent all functions that are differentiable with respect to some fixed additive function $H$ in terms of $H$ and ordinary differentiable functions only. 

As for the second part of the paper, the original statement of Theorem~\ref{two} in our thesis \cite{Ist3} did not contain the condition that the {\bf Q}-vector space $A$ has cardinality ${\bf c}$. We believe that this condition is not needed, but couldn't see how to remove it from the hypothesis of Theorem~\ref{two}. We leave this issue as an open problem. 

Writing this paper has been supported in part by CNCS IDEI Grant PN-II-ID-PCE-2011-3-0981 ``Structure and computational difficulty in combinatorial optimization: an interdisciplinary approach".

%\bibliographystyle{alpha}
%\bibliography{/home/gistrate/Dropbox/texmf/bibtex/bib/bibtheory}
%\end{document}

 {\footnotesize
 
  }
 
 %%%%%%%% AUTHORS

 { \footnotesize  
\medskip
\medskip
 \vspace*{1mm} 
 
\noindent {\it Gabriel Istrate}\\  
West University of Timi\c{s}oara and the e-Austria Research Institute\\
Bd. V. P\^{a}rvan 4, cam 047,\\
Timi\c{s}oara, RO-300223, Romania. \\
E-mail: {\tt gabrielistrate@acm.org}\\ \\  
 }

 \end{document}